\theoremstyle{plain}
\newtheorem{thm}{Theorem}[section]
\newtheorem{prop}[thm]{Proposition}
\newtheorem*{thm*}{Theorem}
\newtheorem*{lemma*}{Lemma}
\newtheorem*{prop*}{Proposition}
\newtheorem*{cor*}{Corollary}
\newtheorem*{conj*}{Conjecture}
\theoremstyle{definition}
\newtheorem{defn}[thm]{Definition}
\newtheorem{ex}[thm]{Example}
\theoremstyle{remark}
\newcommand{\A}{\mathcal{A}}
\newcommand{\C}{\mathcal{C}}
\newcommand{\cM}{\mathcal{M}}
\newcommand{\cL}{\mathcal{L}}
\newcommand{\lrarrow}{\longrightarrow}
\begin{document}
\date{}

\title{Complete Leibniz Algebras}
\author{Kristen Boyle}
\address{Department of Mathematics and Computer Science, Longwood University, Farmville, VA 23909}
\email{boylekj@longwood.edu}
\author{Kailash C. Misra and Ernie Stitzinger}
\address{Department of Mathematics, North Carolina State University, Raleigh, NC 27695-8205}
\email{misra@ncsu.edu, stitz@ncsu.edu}
\subjclass[2010]{17A32 , 17A60 }
\keywords{Leibniz Algebra, derivations, characteristic ideals, completeness}
\thanks{KCM is partially supported by Simons Foundation grant \#  636482}

\begin{abstract}
Leibniz algebras are certain generalization of Lie algebras. It is natural to generalize concepts in Lie algebras to Leibniz algebras and investigate whether the corresponding results still hold. In this paper we introduce the notion of complete Leibniz algebras as generalization of complete Lie algebras. Then we study properties of complete Leibniz algebras and their holomorphs.

\end{abstract}

\maketitle
\bigskip
\section{Introduction}

Leibniz algebras are certain generalization of Lie algebras. In particular, a Leibniz algebra lack the anti-symmetry property of Lie algebras. Leibniz algebras were first studied by Bloh  \cite{bloh} in 1965 and later popularized by Loday \cite{lodayfr}. A left (resp. right) Leibniz algebra $\A$ over field $F$ is a vector space equipped with a bilinear product $[ \, , \,] : \A \times \A \longrightarrow \A$ such that the left (resp. right) multiplication operator is a derivation. Following Barnes \cite{barnes1}, in this paper Leibniz algebras will always refer to finite dimensional left Leibniz algebras. We will follow the notations in \cite{our}. In particular, for $x \in \A$, the left multiplication operator $L_x :\A \lrarrow \A$ is defined by $L_x(y) = [x,y]$ for all $y \in \A$. The vector space of left multiplication operators $L(\A) = \{L_x \mid x \in \A\}$ is a Lie algebra under commutator bracket. Note that a Lie algebra is a Leibniz algebra, but not conversely. An important example of non-Lie Leibniz algebra is the cyclic Leibniz algebra spanned by the powers of a single vector.\\

A Leibniz algebra $\A$ has an abelian ideal ${\rm Leib}(\A)={\rm span}\{x^2=[x, x] \mid x\in \A\}$. The ideal Leib$(\A)= \{0\}$ if and only if $\A$ is a Lie algebra. Furthermore, Leib$(\A)$ is the minimal ideal for which $\A/{\rm Leib}(\A)$ is a Lie algebra. For a Leibniz algebra $\A$, we define the ideals $\A^{(1)} = \A = \A^{1}$, $\A^{(i)} = [\A^{(i-1)}, \A^{(i-1)}]$ and $\A^{i} = [\A, \A^{i-1}]$ for $i \in \mathbb{Z}_{\geq 2}$. The Leibniz algebra is said to be solvable (resp. nilpotent) if $\A^{(m)} = \{0\}$ (resp. $\A^{m} = \{0\}$) for some positive integer $m$. The maximal solvable (resp. nilpotent) ideal of $\A$ is called radical (resp. nilradical) denoted by rad$(\A)$ (resp. nilrad$(\A)$). A Leibniz algebra $\A$ is semisimple if rad$(\A) = {\rm Leib}(\A)$. A linear map $\delta : \A \lrarrow \A$ is a derivation if $\delta [x,x] = [\delta (x),y] + [x, \delta (y)]$. Let $Der(\A)$ be the Lie algebra of all derivations of $\A$. An ideal $B$ of $\A$ is a characteristic ideal if $\delta (B) \subseteq B$ for all $\delta \in Der(\A)$. Many researchers have studied the structure of characteristic ideals of Lie algebras. In the next section we extend some of these results to that of Leibniz algebras.  A Lie algebra $L$  is said to be complete if it has trivial center and all its derivations are inner. Meng \cite{Meng} and his collaborators studied properties of complete Lie algebras. We define a derivation $\delta \in  Der(\A)$ to be inner if there exist $x \in \A$ such that ${\rm im}(\delta -L_x) \subseteq {\rm Leib}(\A)$. We define a Leibniz algebra $\A$ to be complete if the center $Z(\A/{\rm Leib}(\A)) = \{0\}$ and all derivations of $\A$ are inner. As in the case of Lie algebras, we show that nilpotent Leibniz algebras are not complete, but semisimple Leibniz algebras are complete. Finally we define holomorph of a Leibniz algebra and show that the holomorph of a complete Leibniz algebra decomposes like in the case of Lie algebra (see \cite{Meng}). However, unlike in Lie algebra case the converse does not hold for Leibniz algebras.

%\maketitle
\bigskip
\section{Characteristic Ideals}
Let $\A$ be a Leibniz algebra and $Der(\A)$ be its derivation algebra. An ideal $I$ of $\A$ is a characteristic ideal if $\delta (I) \subseteq I$ for all 
$\delta \in Der(\A)$. The left center $Z^l(\A) = \{x \in \A\mid [x, a] = 0 \, {\rm for} \, {\rm all} \, a \in \A\}$ is an abelian ideal of $\A$ which contains ${\rm Leib}(\A)$. The following is an easy, but important observation.
\begin{prop}\label{c-Leib} For a finite dimensional Leibniz algebra $\A$ over field $F$, Leib$(\A)$ and $Z^l(\A)$ are characteristic ideals.
\end{prop}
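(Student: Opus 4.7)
My plan is to verify each of the two claims directly from the derivation identity $\delta[x,y] = [\delta(x),y] + [x,\delta(y)]$, using only the definitions of $\text{Leib}(\A)$ and $Z^l(\A)$.

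For the statement about $\text{Leib}(\A)$, the first step is the polarization observation that for any $a, b \in \A$,
\[
[a,b] + [b,a] = (a+b)^2 - a^2 - b^2 \in \text{Leib}(\A),
\]
since $\text{Leib}(\A)$ is spanned by squares. Then for any $\delta \in Der(\A)$ and any generator $x^2 \in \text{Leib}(\A)$,
\[
\delta(x^2) = \delta[x,x] = [\delta(x),x] + [x,\delta(x)],
\]
and by the polarization step this lies in $\text{Leib}(\A)$. Since $\delta$ is linear and $\text{Leib}(\A)$ is the linear span of such squares, $\delta(\text{Leib}(\A)) \subseteq \text{Leib}(\A)$.

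For the statement about $Z^l(\A)$, take $z \in Z^l(\A)$ and an arbitrary $a \in \A$. Applying the derivation rule to $[z,a] = 0$ gives
\[
0 = \delta[z,a] = [\delta(z),a] + [z,\delta(a)].
\]
Since $z \in Z^l(\A)$, the second summand $[z,\delta(a)]$ also vanishes, so $[\delta(z),a] = 0$ for every $a \in \A$, i.e., $\delta(z) \in Z^l(\A)$.

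Neither part presents a real obstacle; the only subtlety is that, unlike in the Lie case, one cannot conclude $[\delta(x),x] + [x,\delta(x)] = 0$, which is precisely why the polarization remark is needed to keep the image inside $\text{Leib}(\A)$. Once that identity is in hand, both verifications are immediate from the derivation property and the definitions.
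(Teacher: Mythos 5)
Your proof is correct and follows essentially the same route as the paper: the polarization identity $[a,b]+[b,a]=(a+b)^2-a^2-b^2$ that you isolate as a preliminary step is exactly the computation the paper performs inline when it writes $\delta(x^2)=[\delta(x)+x,\delta(x)+x]-[\delta(x),\delta(x)]-[x,x]\in{\rm Leib}(\A)$, and your argument for $Z^l(\A)$ is identical to the paper's.
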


\begin{proof} For $x \in \A$ and $\delta \in Der(\A)$ we have 

$\delta(x^2) = \delta([x,x]) = [\delta(x),x] + [x,\delta(x)] = [\delta(x)+ x, \delta(x) + x] - [\delta(x), \delta(x)] - [x,x] \in {\rm Leib}(\A).$

Hence Leib$(\A)$ is a characteristic ideal. For $a \in  Z^l(\A)$, we have $[a, x] = 0$ for all $x \in \A$. Hence

$0 = \delta([a, x] = [\delta(a), x] + [a, \delta(x)]  =  [\delta(a), x]$

for all $x \in \A$. This implies $\delta(a) \in Z^l(\A)$, proving that $Z^l(\A)$ is a characteristic ideal.

\end{proof}

For a solvable (resp. nilpotent) ideal $I$, we say that its {\it derived length} (resp. {\it nilpotency class}) is $k-1$ if $I^{(k)} = 0$ (resp. $I^k = 0$) and  $I^{(k-1)} \not= 0$ (resp. $I^{k-1} \not= 0$). In the case of a Lie algebra $L$ it is shown that the radical \cite{Petravchuk} (resp. nilradical \cite{Maksimenko}) of $L$  is a characteristic ideal under certain condition on the characteristic of the underlying field $F$. With minor modifications to the Lie algebra proofs we obtain the following results for Leibniz algebras (see \cite{Kristen}).

\begin{thm}\label{radical} Let $\A$ be a Leibniz algebra over field $F$. Then the radical rad$(\A)$ of $\A$ is a characteristic ideal if the characteristic of $F$ is zero or greater than $2^n$ where $n$ is its derived length.
\end{thm}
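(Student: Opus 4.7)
Let $R = {\rm rad}(\A)$ and let $n$ denote its derived length, so $R^{(n+1)} = 0$ in the paper's notation. Fix $\delta \in Der(\A)$. Following Petravchuk's strategy in the Lie algebra setting, the plan is to enlarge $R$ to a $\delta$-invariant ideal $T \supseteq R$ of $\A$ and then show that $T$ is still solvable; maximality of the radical will then force $T = R$, so $\delta(R) \subseteq R$.

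First I would set $T = \sum_{i \geq 0} \delta^i(R)$, which is a finite sum since $\A$ is finite-dimensional, and which is $\delta$-invariant by construction. To check that $T$ is an ideal of $\A$, I would argue by induction on $i$ that $[\delta^i(r), a], [a, \delta^i(r)] \in T$ for all $r \in R$, $a \in \A$. The tool is the iterated Leibniz identity $\delta^i([r, a]) = \sum_{j=0}^i \binom{i}{j}[\delta^j(r), \delta^{i-j}(a)]$, which I solve for the $j = i$ term (using $\binom{i}{i} = 1$, so no characteristic issue arises here) to express $[\delta^i(r), a]$ as $\delta^i([r,a]) \in \delta^i(R) \subseteq T$ minus brackets $[\delta^j(r), \delta^{i-j}(a)]$ with $j < i$; the latter lie in $T$ after relabelling $\delta^{i-j}(a)$ as a generic element of $\A$ and invoking the inductive hypothesis at lower index. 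Because $\A$ is not antisymmetric, the right-bracket case $[a, \delta^i(r)]$ must be treated by a parallel but separate computation, applying the Leibniz identity to $\delta^i([a, r])$.

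The heart of the proof is bounding the derived length of $T$. The tool is again the higher Leibniz identity $\delta^m([x,y]) = \sum_{j=0}^m \binom{m}{j}[\delta^j(x), \delta^{m-j}(y)]$. I would prove by induction on $k$ that $T^{(k)} \subseteq \sum_i \delta^i(R^{(k)})$, where the range of the index $i$ is bounded by a power of $2$ that doubles at each step of the induction. The inductive step expresses an arbitrary bracket $[\delta^p(x), \delta^q(y)]$ with $x, y \in R^{(k-1)}$ as a linear combination of $\delta^{p+q}([x,y]) \in \delta^{p+q}(R^{(k)})$ and other brackets with the same exponent-sum; isolating individual terms then requires inverting a Vandermonde-type linear system whose entries are binomial coefficients $\binom{m}{j}$, obtained by also applying the identity to $\delta^m([\delta^s(x), y])$ for varying $s$. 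Setting $k = n+1$ eventually yields $T^{(n+1)} \subseteq \sum_i \delta^i(R^{(n+1)}) = 0$, so $T$ is solvable.

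The main obstacle is exactly this combinatorial inversion: the linear systems one inverts have binomial-coefficient entries $\binom{m}{j}$ for $m$ up to a bound of order $2^n$, and they are invertible precisely when none of these coefficients vanish in $F$. Since every such $\binom{m}{j}$ is a positive integer no larger than $2^m \leq 2^n$, the hypothesis that the characteristic of $F$ is zero or greater than $2^n$ is exactly what is needed to guarantee nonvanishing mod ${\rm char}(F)$. With $T$ solvable and containing $R$, the maximality of ${\rm rad}(\A)$ among solvable ideals forces $T = R$, whence $\delta(R) \subseteq R$, proving that ${\rm rad}(\A)$ is a characteristic ideal. The only genuinely Leibniz-specific step is the two-sided verification in the ideal-check; the combinatorial core of the argument transfers from the Lie setting unchanged, since it uses only the derivation identity and the ideal property of $R$.
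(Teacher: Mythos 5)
Your overall frame---enlarge $R=\mathrm{rad}(\A)$ to a $\delta$-invariant solvable ideal and invoke maximality---is indeed the Petravchuk-style route the paper points to (the paper itself does not prove the theorem in-text; it cites \cite{Petravchuk} and defers the Leibniz modification to \cite{Kristen}), and your two-sided ideal check for $T$ is fine. But the central inductive claim $T^{(k)} \subseteq \sum_i \delta^i(R^{(k)})$ is false, and the way you propose to prove it cannot be repaired as stated. Your induction uses only that $R$ is a solvable ideal and $\delta$ a derivation, so if it worked it would apply to any solvable ideal; yet take the Heisenberg Lie algebra $H=\mathrm{span}\{x,y,z\}$ with $[x,y]=z$, the abelian ideal $R=\mathrm{span}\{y,z\}$ and the derivation with $\delta(y)=x$, $\delta(x)=\delta(z)=0$ (characteristic zero, so the hypothesis holds): then $T=\sum_i\delta^i(R)=H$ and $T^{(2)}=\mathrm{span}\{z\}\neq 0=\sum_i\delta^i(R^{(2)})$. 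The failure is visible inside your own inductive step: when you isolate $[\delta^p x,\delta^q y]$ from $\delta^{p+q}([x,y])$, the unbalanced terms such as $[\delta^{p+q}x,y]$ and $[x,\delta^{p+q}y]$ do not lie in $\sum_i\delta^i(R^{(k)})$; the ideal property only places them in $R$, one level higher in the derived series. The best containment of this shape is therefore something like $T^{(2)}\subseteq R+\sum_i\delta^i(R^{(2)})$, and these leftover lower-level terms propagate, which is exactly why the genuine argument concludes that the enlarged ideal is solvable of derived length roughly $n+1$, not that its $(n+1)$-st derived term is zero.

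There are two further problems. First, by taking $T=\sum_{i\ge 0}\delta^i(R)$ you lose control of the exponents: $T^{(2)}$ already contains brackets $[\delta^p x,\delta^q y]$ with $p+q$ bounded only by the dimension, so the binomial coefficients you must handle are $\binom{m}{j}$ for unboundedly large $m$, and the hypothesis that the characteristic exceeds $2^n$ does not make those nonzero. The standard scheme avoids this by working with $R+\delta(R)$ alone (a single application of $\delta$), proving it is a solvable ideal---there the $\delta$-exponents occurring at level $k$ of the derived series stay at most about $2^k\le 2^n$---and then maximality of the radical immediately gives $\delta(R)\subseteq R$, after which your full $T$ is simply $R$ and never needs separate treatment. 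Second, your justification of nonvanishing is internally inconsistent: you cannot have $m$ of order $2^n$ and also $\binom{m}{j}\le 2^m\le 2^n$, and in any case $\binom{m}{j}$ can exceed the characteristic $p$; the correct reason is that $m\le 2^n<p$ forces $p\nmid m!$, hence $p\nmid\binom{m}{j}$. (A minor Leibniz point: the manipulations also use that each $R^{(k)}$ is a two-sided ideal of $\A$, which is true but deserves the same left/right verification you gave for $T$.) So the skeleton matches the intended proof, but the combinatorial core---the precise inductive containment and the bookkeeping that keeps every binomial upper index at most $2^n$---is missing, and the key step as you state it is false.
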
 

\begin{thm}\label{nilradical}  Let $\A$ be a Leibniz algebra over field $F$. Then the nilradical nilrad$(\A)$ of $\A$ is a characteristic ideal if the characteristic of $F$ is zero or greater than $n+1$ where $n$ is its nilpotency class.
\end{thm}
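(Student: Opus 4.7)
The plan is to mirror Maksimenko's Lie-algebra proof \cite{Maksimenko}, substituting the left Leibniz identity for the Jacobi identity at each step. Given $\delta \in Der(\A)$ and $N = \mathrm{nilrad}(\A)$ of nilpotency class $n$, the strategy is to build a nilpotent ideal of $\A$ that contains $N + \delta(N)$; since $N$ is the maximal nilpotent ideal, this will force $\delta(N) \subseteq N$, proving that $N$ is characteristic.

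First I would verify that the finite-dimensional subspace $M := \sum_{i \geq 0} \delta^i(N)$ is an ideal of $\A$. By induction on $i$, combined with the two derivation identities
\[
[x, \delta(y)] = \delta([x, y]) - [\delta(x), y], \qquad [\delta(y), x] = \delta([y, x]) - [y, \delta(x)],
\]
one shows that both $[\A, \delta^i(N)]$ and $[\delta^i(N), \A]$ lie in $\sum_{j \leq i} \delta^j(N) \subseteq M$, so $M$ is a two-sided ideal.

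The technical heart of the argument is the claim that $M$ is nilpotent. Start from the identity $[x_1, [x_2, \ldots, [x_n, x_{n+1}] \ldots]] = 0$ for any $x_1, \ldots, x_{n+1} \in N$ (since $N^{n+1} = 0$). Apply $\delta^K$ and expand via the generalized Leibniz formula to obtain
\[
\sum_{i_1 + \cdots + i_{n+1} = K} \binom{K}{i_1, \ldots, i_{n+1}} [\delta^{i_1} x_1, [\delta^{i_2} x_2, \ldots, [\delta^{i_n} x_n, \delta^{i_{n+1}} x_{n+1}] \ldots]] = 0.
\]
The hypothesis $\mathrm{char}(F) = 0$ or $\mathrm{char}(F) > n+1$ makes every multinomial coefficient that appears in the intended range a unit in $F$. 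A downward induction on the total derivation count then rewrites any $(n+1)$-fold nested bracket of elements of $M$ as a combination of similar brackets with strictly smaller total derivation count, eventually reducing to brackets of $n+1$ elements of $N$, which vanish. This yields $M^{n+1} = 0$, so $M$ is a nilpotent ideal containing $N + \delta(N)$, and maximality of the nilradical forces $M = N$, hence $\delta(N) \subseteq N$.

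The main obstacle will be the combinatorial reduction in the third step. The Leibniz setting produces extra terms valued in $\mathrm{Leib}(\A) \subseteq N$ that are absent in the Lie case, and one must check that these neither disturb the invertibility used for the multinomial coefficients nor break the downward induction. This is the source of the ``minor modifications'' referenced by the authors, and the careful bookkeeping is carried out in \cite{Kristen}.
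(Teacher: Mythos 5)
Your overall strategy---adapt Maksimenko's argument by producing a nilpotent ideal of $\A$ containing $N+\delta(N)$ and then invoking maximality of the nilradical---is exactly the route the paper intends (it does not write the proof out, but defers to \cite{Maksimenko} and \cite{Kristen}). However, your outline has a genuine gap at its technical heart. By enlarging to $M=\sum_{i\ge 0}\delta^i(N)$ you lose control of the characteristic hypothesis: in a bracket of $n+1$ elements of $M$ the total derivation count $K$ is unbounded, and once $K\ge p=\mathrm{char}(F)$ the relevant multinomial coefficients need not be units (for instance $\binom{p}{1,\,p-1,\,0,\dots,0}=p\equiv 0$), so you cannot solve for the bracket you are trying to control; the hypothesis $p>n+1$ only guarantees invertibility for derivation counts up to $n+1$. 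Moreover, the proposed ``downward induction on the total derivation count'' does not follow from the displayed identity: every term in the expansion of $\delta^{K}\bigl([x_1,[x_2,\dots,[x_n,x_{n+1}]\dots]]\bigr)=0$ has total derivation count exactly $K$, so the relation only trades one distribution of $K$ derivations for other distributions with the same total---it never produces brackets of strictly smaller count. Finally, the terms in which two or more derivations pile onto one slot involve $\delta^{j}(x)$ with $j\ge 2$, which are not elements of $N+\delta(N)$; handling these (for example via $\delta^{j}(N^{m})\subseteq N^{m-j}$, which holds because the powers $N^{m}$ are ideals of $\A$) is the actual content of Maksimenko's proof, not routine bookkeeping that can simply be delegated to \cite{Kristen}.

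The repair is to drop $M$ and work with $J=N+\delta(N)$ only, which your computation with the two derivation identities already shows is a two-sided ideal of $\A$. In any $(n+1)$-fold bracket of elements of $J$ each slot carries at most one $\delta$, so every derivation count that arises is at most $n+1<p$ and the corresponding coefficients (at worst $(n+1)!$) are invertible; the induction is then on the number of slots carrying a $\delta$, with the $\delta^{j}$, $j\ge 2$, terms absorbed as above. Once $J$ is shown to be nilpotent (any bound on its class suffices; one should not expect $J^{n+1}=0$ on the nose), maximality of $\mathrm{nilrad}(\A)$ gives $\delta(N)\subseteq N$, and then $\delta^{i}(N)\subseteq N$ for all $i$ is automatic---so nothing is gained, and the characteristic bound is lost, by passing to the larger ideal $M$. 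The Leibniz-specific points you flag (left versus right multiplications, $\mathrm{Leib}(\A)$-valued terms) are indeed the ``minor modifications''; the multinomial/induction step is where the proof actually lives.
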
 

The following example is due to Jacobson (\cite{Jacobson}, page 75) and Seligman (\cite{Seligman}, page 163), for Lie algebras, hence Leibniz algebras over field $F$ of characteristic $p>2$. It shows that the bounds on the characteristic of the field in Theorem \ref{radical} and Theorem \ref{nilradical} can not be improved.

\begin{ex} Let $F$ be a field of characteristic $p>2$. Let $R$ be the commutative associative algebra over $F$ with basis $\{1, x, x^2, \cdots , x^{p-1}\}$ where $x^p = 0$.Then dim$(R) = p$. The maximal nilpotent ideal $N$ of $R$ has basis $\{x, x^2, \cdots , x^{p-1}\}$. Then dim$(R/N) = 1$. Let $L$ be a simple Lie algebra over $F$. Then $M = L\otimes R$ is a Lie algebra with $[a\otimes y, b\otimes z] = [a,b]\otimes yz$ for all $a,b \in L$ and $y, z \in R$. Since $(L\otimes N)^p = L^p\otimes N^p = L\otimes 0 = 0$, $L\otimes N$ is a nilpotent (hence solvable) ideal of $M$. Since $M/(L\otimes N) = (L\otimes R)/(L\otimes N)$ is isomorphic to $L$, $L\otimes N$ is both the radical and nilradical of $M$.  Consider the derivation $\delta = 1\otimes \frac{d}{dx}$ in $Der(M)$ defined by $\delta (a\otimes x^t) =t(a\otimes x^{t-1}$ for all $a\in L$ and $1\leq t\leq p-1$. Since $\delta (a\otimes x) = a\otimes 1 \not\in L\otimes N$, $L\otimes N$ is not a characteristic ideal of $M$. Since $x^p =0$, the nilpotent class of $L\otimes N$ is $p-1$ and the derived length of $L\otimes N$ is the smallest integer $m$ such that $2^m \geq p$.

\end{ex} 
A Lie algebra is characteristic semisimple if its maximal solvable characteristic ideal is trivial.
The maximal solvable characteristic ideal of the Leibniz algebra $\A$ is called the {\it characteristic radical} of $\A$ and is denoted by $R= Crad(\A)$. Since the left center $Z^l(\A)$ is a characteristic abelian ideal $Crad(\A)$ contains $Z^l(\A)$, hence contains Leib$(\A)$. 
The Leibniz algebra $\A$ is said to be {\it characteristically semisimple} if $Crad(\A) = {\rm Leib}(\A)$. It is said to be {\it characteristically simple} if it's only proper characteristic ideal is Leib$(\A)$ and $\A^2 = \A$.

\begin{thm} Let $R$ be characteristic radical of the Leibniz algebra $\A$. Then $\A/R$ is a characteristic semisimple Lie algebra.
\end{thm}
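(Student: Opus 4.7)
The plan is to establish two separate assertions: first that $\A/R$ is a Lie algebra, and second that its only solvable characteristic ideal is $\{0\}$. For the Lie-algebra part I would invoke the observation already recorded in the text that $R = Crad(\A)$ contains $\mathrm{Leib}(\A)$. Consequently the quotient map $\A \twoheadrightarrow \A/R$ factors through $\A/\mathrm{Leib}(\A)$, and since $\A/\mathrm{Leib}(\A)$ is a Lie algebra, so is the further quotient $\A/R$. In particular $\mathrm{Leib}(\A/R)=\{0\}$, so the Leibniz definition of characteristic semisimplicity reduces to the Lie definition for $\A/R$, and it suffices to show that the maximal solvable characteristic ideal of $\A/R$ is trivial.

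For the main step, let $\bar{J}$ be a solvable characteristic ideal of $\A/R$, and let $J = \pi^{-1}(\bar{J})$, where $\pi:\A\to \A/R$ is the canonical projection. I would show that $J$ itself is a solvable characteristic ideal of $\A$; by the maximality that defines $R$, this forces $J\subseteq R$, i.e.\ $\bar{J}=\{0\}$, as required. Solvability of $J$ comes from the fact that $R$ is solvable (by definition of the characteristic radical) together with $J/R\cong \bar{J}$ being solvable: if $\bar{J}^{(m)}=0$ and $R^{(k)}=0$, then $J^{(m)}\subseteq R$, whence $J^{(m+k)}\subseteq R^{(k)}=0$. For the characteristic property, take any $\delta\in Der(\A)$; since $R$ is characteristic we have $\delta(R)\subseteq R$, so $\delta$ descends to a well-defined derivation $\bar\delta\in Der(\A/R)$. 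The fact that $\bar{J}$ is characteristic in $\A/R$ gives $\bar\delta(\bar{J})\subseteq\bar{J}$, which on lifting reads $\delta(J)+R\subseteq J$, and since $R\subseteq J$ this says $\delta(J)\subseteq J$.

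I do not anticipate a serious obstacle: the argument is a direct transport of the analogous Lie algebra fact through the quotient $\A/R$, using only the two features of $R$ that are immediate from its definition, namely that $R$ contains $\mathrm{Leib}(\A)$ and that $R$ is characteristic. The two ancillary facts needed --- that every derivation descends modulo a characteristic ideal, and that solvability of Leibniz algebras is closed under extensions --- are routine unwindings of definitions and match the Lie algebra versions verbatim.
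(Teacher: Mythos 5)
Your proposal is correct and follows essentially the same route as the paper's own proof: pull back a solvable characteristic ideal of $\A/R$ to $\A$, use that $R$ is characteristic to descend derivations, conclude the preimage is a solvable characteristic ideal hence contained in $R$, and use $\mathrm{Leib}(\A)\subseteq R$ to see that $\A/R$ is a Lie algebra. You merely spell out two steps the paper leaves implicit (solvability under extensions and the factoring through $\A/\mathrm{Leib}(\A)$), which is fine.
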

\begin{proof} Let $S/R$ be a solvable characteristic ideal of $\A/R$. Then $S$ is a solvable ideal of $\A$. Let $\delta$ be a derivation of $\A$. Since $R$ is a characteristic ideal of $\A$, $\delta$ induces a derivation $\bar{\delta}$ on $\A/R$ naturally. Since $S/R$ is a characteristic ideal of $\A/R$, $\bar{\delta} (S/R) \subseteq S/R$ which implies $\delta(S) \subseteq S$. Hence $S$ is a solvable characteristic ideal of $\A$. Thus $S \subseteq R$ which implies $S/R = \{0\}$. Since  Leib$(\A) \subseteq R$, we have the desired result.

\end{proof}
%The Leibniz algebra $\A$ is said to be {\it characteristically semisimple} if $Crad(\A) = {\rm Leib}(\A)$. It is said to be {\it characteristically simple} if it's only proper characteristic ideal is Leib$(\A)$ and $\A^2 = \A$.

\begin{thm} If the Leibniz algebra $\A$ is characteristacally semisimple (resp. simple) then the Lie algebra $\A/{\rm Lieb}(\A)$ is characteristically semisimple (resp. simple).

\end{thm}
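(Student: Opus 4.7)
The plan is to exploit the correspondence between characteristic ideals of the quotient Lie algebra $L := \A/{\rm Leib}(\A)$ and characteristic ideals of $\A$ containing ${\rm Leib}(\A)$. The key ingredient is Proposition \ref{c-Leib}: since ${\rm Leib}(\A)$ is itself a characteristic ideal of $\A$, every $\delta \in Der(\A)$ descends to a derivation $\bar{\delta} \in Der(L)$, and consequently any characteristic ideal of $L$ is preserved by each such induced $\bar{\delta}$.

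The central lifting step is the following: if $J$ is a characteristic ideal of $L$ with preimage $I \subseteq \A$ under the projection $\A \to L$ (so ${\rm Leib}(\A) \subseteq I$ and $I/{\rm Leib}(\A) = J$), then $I$ is a characteristic ideal of $\A$. Indeed, for any $\delta \in Der(\A)$, the induced $\bar{\delta}$ satisfies $\bar{\delta}(J) \subseteq J$, which unwinds to $\delta(I) \subseteq I + {\rm Leib}(\A) = I$. Moreover, since ${\rm Leib}(\A)$ is abelian, if $J = I/{\rm Leib}(\A)$ is solvable then so is $I$, being an extension of a solvable algebra by an abelian ideal.

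With this in hand, the semisimple case is immediate. Assuming $\A$ is characteristically semisimple and letting $J$ be a solvable characteristic ideal of $L$, the lift $I$ is a solvable characteristic ideal of $\A$, hence $I \subseteq Crad(\A) = {\rm Leib}(\A)$, forcing $J = 0$. Thus $Crad(L) = 0$, so $L$ is characteristically semisimple as a Lie algebra. For the simple case, a proper characteristic ideal $J \subsetneq L$ lifts to a proper characteristic ideal $I \subsetneq \A$ with ${\rm Leib}(\A) \subseteq I$; characteristic simplicity of $\A$ forces $I = {\rm Leib}(\A)$, hence $J = 0$, and the condition $L^2 = L$ follows by projecting $\A^2 = \A$.

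There is no substantial obstacle here; the argument is a routine quotient correspondence. The only point that requires a moment of care is directionality: we rely on derivations of $\A$ descending to $L$, which Proposition \ref{c-Leib} delivers for free, while the stronger hypothesis that $J$ is stable under all of $Der(L)$ is invoked only for the subfamily of induced derivations, and that suffices for the lift $I$ to be characteristic in $\A$.
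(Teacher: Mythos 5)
Your proof is correct and follows essentially the same route as the paper: both rest on the correspondence that a characteristic ideal of $\A/{\rm Leib}(\A)$ pulls back (via the derivations of $\A$ descending to the quotient, since ${\rm Leib}(\A)$ is characteristic) to a characteristic ideal of $\A$ containing ${\rm Leib}(\A)$, which characteristic semisimplicity or simplicity of $\A$ then forces to be ${\rm Leib}(\A)$ itself. You merely spell out explicitly the lifting step that the paper leaves implicit (it is carried out in detail in the preceding theorem on $\A/Crad(\A)$), so there is nothing to correct.
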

\begin{proof}
Suppose $\A$ is a characteristacally semisimple Leibniz algebra. Then  $Crad(\A) = {\rm Leib}(\A)$. Hence $Crad(\A/{\rm Leib}(\A)) = \{0\}$ which implies that $\A/{\rm Leib}(\A)$ is a characteristacally semisimple Lie algebra.

Now suppose $\A$ is a characteristically simple Leibniz algebra. If ${\rm Leib}(\A) = \{0\}$, then clearly $\A$ is  a characteristacally simple Lie algebra. Assume ${\rm Leib}(\A) \not= \{0\}$. Then ${\rm Leib}(\A)$ is the only proper characteristic ideal of $\A$ and  $\A^2 = \A$. This implies that the Lie algebra $\A/{\rm Leib}(\A)$ has no proper nonzero characteristic ideals and $(\A/{\rm Leib}(\A))^2 = \A/{\rm Leib}(\A)$. Hence 
the Lie algebra $\A/{\rm Leib}(\A)$ is characteristically simple.
\end{proof}

\begin{defn} The Leibniz algebra is said to be {\it completely semisimple} if it is a direct sum of ideals, each of which is characteristically simple.
\end{defn}

\begin{thm} Every completely semisimple Leibniz algebra is characteristic semisimple.

\end{thm}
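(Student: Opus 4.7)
The plan is to show directly that $Crad(\A) \subseteq \mathrm{Leib}(\A)$, since the reverse inclusion is automatic ($\mathrm{Leib}(\A)$ is a solvable characteristic ideal by Proposition~\ref{c-Leib}). Write $\A = A_1 \oplus \cdots \oplus A_n$ with each $A_i$ a characteristically simple ideal, and denote the projections $\pi_i : \A \to A_i$. I will first record the standard facts about this decomposition: since each $A_i$ is a two-sided ideal, $[A_i,A_j] \subseteq A_i \cap A_j = 0$ for $i\neq j$, so for $x=\sum x_k,\ y=\sum y_k$ one has $[x,y]=\sum_k [x_k,y_k]$. This makes each $\pi_i$ a surjective Leibniz homomorphism, and gives $\mathrm{Leib}(\A) = \bigoplus_i \mathrm{Leib}(A_i)$.

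The main technical step is to prove that $R_i := \pi_i(R)$ is a characteristic ideal of $A_i$ for each $i$, where $R = Crad(\A)$. Being the image of an ideal under a surjective homomorphism, $R_i$ is an ideal of $A_i$. For the characteristic property I would take an arbitrary $\delta_i \in Der(A_i)$ and extend it to $\tilde\delta \in Der(\A)$ by declaring $\tilde\delta|_{A_j}=0$ for $j\neq i$; the Leibniz identity for $\tilde\delta$ follows from the cross-bracket vanishing observed above. Since $R$ is characteristic in $\A$, $\tilde\delta(R)\subseteq R$, and for any $r\in R$ we have $\tilde\delta(r) = \delta_i(\pi_i(r)) \in A_i \cap R \subseteq R_i$. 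Hence $\delta_i(R_i)\subseteq R_i$ as required.

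With that in hand, the conclusion is immediate. As a homomorphic image of the solvable ideal $R$, the characteristic ideal $R_i$ is solvable. But $A_i$ is characteristically simple with $A_i^2=A_i$, so $A_i^{(k)}=A_i\neq 0$ for all $k$, meaning $A_i$ itself is not solvable. The only remaining options among the characteristic ideals of $A_i$ are $\{0\}$ and $\mathrm{Leib}(A_i)$, both of which lie in $\mathrm{Leib}(A_i)$. Therefore
\[
R \;\subseteq\; \sum_i \pi_i(R) \;=\; \sum_i R_i \;\subseteq\; \sum_i \mathrm{Leib}(A_i) \;=\; \mathrm{Leib}(\A),
\]
finishing the proof.

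The only mildly delicate point is the extension of a derivation of $A_i$ to a derivation of $\A$ that vanishes on the other summands; once the cross-brackets $[A_i,A_j]=0$ are in place this is a short check, so I do not expect any real obstacle. The rest is the characteristic simplicity dichotomy together with the fact that $A_i^2=A_i$ prevents $A_i$ from being solvable.
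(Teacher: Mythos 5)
Your proof is correct and follows essentially the same route as the paper: decompose $\A$ into characteristically simple ideals, note the cross-brackets vanish, extend derivations of each summand by zero to see that the projections of the characteristic radical are solvable characteristic ideals, and use $A_i^2=A_i$ to rule out $A_i$ being solvable so the projections land in $\mathrm{Leib}(A_i)$. The only cosmetic difference is that you work directly with $Crad(\A)$ while the paper takes an arbitrary solvable characteristic ideal $B$ and passes to $B+\mathrm{Leib}(\A)$; you also spell out the non-solvability of the summands, which the paper leaves implicit.
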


\begin{proof} Let $\A$ be a completely semisimple Leibniz algebra. Then $\A = \A_1 \oplus \A_2 \oplus \cdots \oplus \A_k$ where each $\A_i$ is characteristically simple. Then clearly Leib$(\A) = \oplus_{i=1}^k$Leib$(\A_i)$. Let $B$ be a solvable characteristic ideal of $\A$. Then $D = B+$ Leib$(\A)$ is a solvable characteristic ideal of $\A$. For $1 \le i \le k$, the projection map from $\A$ to $\A_i$ sends $D$ onto a solvable characteristic ideal $D_i$ of $\A_i$ since each derivation $\delta_i$ of $\A_i$ extends to a derivation $\delta$ of $\A$ by defining $\delta (A_j) = 0$ for $j \not= i$. Since each $\A_i$ is characteristically simple we have $D_i \subseteq$ Leib$(\A_i)$, $1 \le i \le k$. Hence $D = B+$ Leib$(\A) \subseteq$ Leib$(\A)$ which implies $B \subseteq$ Leib$(\A)$. Therefore, $\A$ is characteristically semisimple.

\end{proof}

%It is shown by Seligman \cite{Seligman} that Any proper ideal of a characteristically simple Lie algebra is nilpotent. Using this result we have the following 
\section{Complete Leibniz Algebra}
A Lie algebra is said to be complete if it has trivial center and all its derivations are inner. It is known that semisimple Lie algebras over a field of characteristic zero is complete, while nonzero nilpotent Lie algebras are never complete since it has nontrivial center and it also has outer derivations \cite{Jacobson-1}. Meng \cite{Meng} has characterized complete Lie algebras in terms certain conditions  on its holomorph. Motivated by these results, we define complete Leibniz algebras below and study their properties.
 
Let $\A$ be a Leibniz algebra over field $F$ and $Der(\A)$ be its derivation algebra. We say that a derivation $\delta \in Der(\A)$ is inner if there exists $x \in \A$ such that ${\rm im}(\delta - L_x) \subseteq {\rm Leib}(\A)$.

\begin{defn} The Leibniz algebra $\A$  is said to be {\it complete} if 
\begin{enumerate}
\item the center $Z(\A/{\rm Leib}(\A)) = \{0\}$, and 
\item every derivation of $\A$ is inner.
\end{enumerate}
\end{defn}

Observe that $Z(\A/{\rm Leib}(\A)) = \{0\}$ implies $Z^l(\A) = {\rm Leib}(\A)$, but not conversely. For example, consider the three dimensional 
cyclic Leibniz algebra $\A = {\rm span}\{x, x^2,x^3\}$ with $[x,x] = x^2, [x,x^2] = x^3, [x,x^3] = 0$. Then $Z^l(\A) = {\rm Leib}(\A) = {\rm span}\{x^2,x^3\}$, but  $Z(\A/{\rm Leib}(\A)) \not= \{0\}$. If a Lie algebra $\A$ is complete as a Leibniz algebra, then it is also complete as a Lie algebra since ${\rm Leib}(\A) =\{0\}$ in this case. Also note that if $\A$ is a complete Leibniz algebra, then the Lie algebra $\A/{\rm Leib}(\A)$ is non-abelian.

\begin{prop} For a Leibniz algebra $\A$ if the Lie algebra $\A/{\rm Leib}(\A)$ is complete, then $\A$ is a complete Leibniz algebra.
\end{prop}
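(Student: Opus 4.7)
The plan is to unpack the hypothesis "$\A/\mathrm{Leib}(\A)$ is complete as a Lie algebra" into its two component conditions and check that each yields the corresponding condition for $\A$ to be a complete Leibniz algebra. The first condition is essentially immediate: completeness of $\A/\mathrm{Leib}(\A)$ as a Lie algebra means by definition that $Z(\A/\mathrm{Leib}(\A)) = \{0\}$, which is precisely condition (1) of the definition.

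For condition (2), I would start with an arbitrary derivation $\delta \in Der(\A)$ and use Proposition~\ref{c-Leib}, which tells us that $\mathrm{Leib}(\A)$ is a characteristic ideal. This means $\delta(\mathrm{Leib}(\A)) \subseteq \mathrm{Leib}(\A)$, so $\delta$ descends to a well-defined linear map $\bar{\delta}$ on the quotient $\bar{\A} = \A/\mathrm{Leib}(\A)$. A direct check (using that the bracket on $\bar{\A}$ is induced by the bracket on $\A$) shows $\bar{\delta}$ is a derivation of the Lie algebra $\bar{\A}$.

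Now I would invoke the hypothesis that $\bar{\A}$ is a complete Lie algebra: every derivation of $\bar{\A}$ is inner in the Lie sense, so there exists $\bar{x} \in \bar{\A}$ with $\bar{\delta} = \mathrm{ad}_{\bar{x}}$. Lifting $\bar{x}$ to any preimage $x \in \A$, for every $y \in \A$ we have
\[
\overline{\delta(y) - L_x(y)} = \bar{\delta}(\bar{y}) - [\bar{x}, \bar{y}] = 0 \quad \text{in } \bar{\A},
\]
so $\delta(y) - L_x(y) \in \mathrm{Leib}(\A)$, i.e.\ $\mathrm{im}(\delta - L_x) \subseteq \mathrm{Leib}(\A)$. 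This is exactly the Leibniz notion of inner derivation used in the paper, so $\delta$ is inner and condition (2) holds.

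I do not expect a serious obstacle here; the argument is essentially a routine lifting/quotient argument once Proposition~\ref{c-Leib} is in hand. The only mild point requiring care is verifying that $\bar{\delta}$ is genuinely a Lie algebra derivation on $\bar{\A}$ (rather than merely a linear map), but this follows immediately from $\delta$ being a Leibniz derivation and the bracket on $\bar{\A}$ being the induced one.
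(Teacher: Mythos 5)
Your proposal is correct and follows essentially the same route as the paper's proof: use Proposition~\ref{c-Leib} to descend $\delta$ to a derivation of $\A/{\rm Leib}(\A)$, apply completeness of the quotient to write $\bar{\delta} = \mathrm{ad}_{\bar{x}}$, and lift to conclude ${\rm im}(\delta - L_x) \subseteq {\rm Leib}(\A)$. Your added care in checking that $\bar{\delta}$ is genuinely a derivation of the quotient is a detail the paper leaves implicit, but there is no substantive difference.
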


\begin{proof}
Suppose $\A/{\rm Leib}(\A)$ is a complete Lie algebra. Then $Z(\A/{\rm Leib}(\A)) = \{0\}$ and all the derivations of $\A/{\rm Leib}(\A)$ are inner. Let $\delta \in Der(\A)$. By Proposition \ref{c-Leib} , $\delta ({\rm Leib}(\A)) \subseteq {\rm Leib}(\A)$. So $\delta$ naturally induces a derivation $\bar{\delta} \in Der(\A/{\rm Leib}(\A))$. Hence there is $\bar{x} = x + {\rm Leib}(\A) \in \A/{\rm Leib}(\A)$ such that $\bar{\delta} = L_{\bar{x}}$. This implies that 
${\rm im}(\delta - L_x) \subseteq {\rm Leib}(\A)$, which proves the proposition.
\end{proof}

We say that the Leibniz algebra $\A$ is semisimple if $rad(\A) = {\rm Leib}(\A)$ (see \cite{Omirov}). Recall that over field of characteristic zero, semisimple Lie algebras are complete. As shown below the corresponding result holds for Leibniz algebras.

\begin{thm} Let $\A$ be a semisimple Leibniz algebra over field $F$ of characteristic zero. Then $\A$ is a complete Leibniz algebra.

\end{thm}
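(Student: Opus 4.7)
The plan is to reduce the statement to the preceding proposition: it suffices to show that the Lie algebra $\A/{\rm Leib}(\A)$ is a complete Lie algebra in the classical sense, and then invoke that proposition to transfer completeness back to $\A$.

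First I would check that the semisimplicity hypothesis on $\A$ translates cleanly to the quotient. Since ${\rm Leib}(\A)$ is the minimal ideal making the quotient a Lie algebra, $\A/{\rm Leib}(\A)$ is a Lie algebra. The ideals of $\A/{\rm Leib}(\A)$ are exactly the images of ideals of $\A$ containing ${\rm Leib}(\A)$, and a solvable ideal of the quotient pulls back to a solvable ideal of $\A$ (since ${\rm Leib}(\A)$ is abelian). Hence any solvable ideal of $\A/{\rm Leib}(\A)$ lifts into ${\rm rad}(\A)={\rm Leib}(\A)$ and must be zero. In other words, ${\rm rad}(\A/{\rm Leib}(\A))=\{0\}$, so $\A/{\rm Leib}(\A)$ is a semisimple Lie algebra over a field of characteristic zero.

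Next I would invoke the classical theorem (see e.g.\ Jacobson \cite{Jacobson-1}) that a semisimple Lie algebra over a field of characteristic zero has trivial center and only inner derivations; that is, it is a complete Lie algebra. This is the substantive input, resting on Weyl's theorem on complete reducibility and Whitehead's lemma that $H^{1}(\mathfrak{g},\mathfrak{g})=0$ for semisimple $\mathfrak{g}$. Once this is in hand, the previous proposition applies verbatim: since $\A/{\rm Leib}(\A)$ is a complete Lie algebra, $\A$ is a complete Leibniz algebra, finishing the proof.

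There is essentially no obstacle beyond having the right structural fact about the quotient, since the classical completeness theorem and the bridging proposition do all of the heavy lifting; the only point requiring care is the verification that $\text{rad}(\A)=\text{Leib}(\A)$ implies semisimplicity of the quotient Lie algebra, which is a routine ideal-correspondence argument.
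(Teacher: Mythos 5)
Your proposal is correct and follows essentially the same route as the paper: show that $\A/{\rm Leib}(\A)$ is a semisimple, hence complete, Lie algebra over a field of characteristic zero, and then transfer completeness back to $\A$ via the descent of derivations through the characteristic ideal ${\rm Leib}(\A)$. The only cosmetic difference is that you establish semisimplicity of the quotient by the ideal-correspondence argument from ${\rm rad}(\A)={\rm Leib}(\A)$ and then cite the earlier bridging proposition, whereas the paper quotes the structure theorem $\A=(S_1\oplus\cdots\oplus S_k)+{\rm Leib}(\A)$ and repeats the bridging argument inline; both are sound.
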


\begin{proof} 
Let $\A$ be a semisimple Leibniz algebra over a field $F$ of characteristic zero. Then $\A = (S_1 \oplus S_2 \oplus \cdots \oplus S_k) + {\rm Leib}(\A)$ where each $S_j$ is a simple Lie algebra \cite{GKO}. Hence $\A/{\rm Leib}(\A)$ is a semisimple Lie algebra which is a complete Lie algebra. So $Z(\A/{\rm Leib}(\A)) = \{0\}$. Furthermore, if $\delta \in Der(\A)$ then it induces a derivation $\bar{\delta}$ on $\A/{\rm Leib}(\A)$ since ${\rm Leib}(\A)$ is a characteristic ideal. Therefore, there exists $\bar{x} = x+ {\rm Leib}(\A) \in \A/{\rm Leib}(\A)$ such that $\bar{\delta} = {\rm ad}_{\bar{x}}$ which implies that ${\rm im}(\delta - L_x) \subseteq {\rm Leib}(\A)$. This proves that $\A$ is a complete Leibniz algebra.
\end{proof}

\begin{prop} Let $\A$ be a nilpotent Leibniz algebra. Then $\A$ is not complete.
\end{prop}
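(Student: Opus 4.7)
The plan is to falsify the first condition in the definition of a complete Leibniz algebra, that is, to show $Z(\A/{\rm Leib}(\A)) \neq \{0\}$ for every nonzero nilpotent Leibniz algebra $\A$. Once this is done, the second condition becomes irrelevant and $\A$ cannot be complete. (The statement is implicitly taken for $\A \neq \{0\}$, since the zero algebra vacuously satisfies both completeness conditions.)

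First, I would observe that nilpotency passes to quotients: if $\A^m = \{0\}$ then $(\A/{\rm Leib}(\A))^m \subseteq (\A^m + {\rm Leib}(\A))/{\rm Leib}(\A) = \{0\}$. Since ${\rm Leib}(\A)$ is, by definition, the minimal ideal making the quotient a Lie algebra, $\A/{\rm Leib}(\A)$ is therefore a nilpotent Lie algebra.

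Next, I would verify that this quotient is nonzero. Suppose for contradiction that $\A = {\rm Leib}(\A)$. Because ${\rm Leib}(\A)$ is an abelian ideal, $\A$ is itself an abelian Leibniz algebra, so $x^2 = [x,x] = 0$ for every $x \in \A$; but then ${\rm Leib}(\A) = {\rm span}\{x^2 \mid x \in \A\} = \{0\}$, contradicting $\A \neq \{0\}$. Hence $\A/{\rm Leib}(\A)$ is a nonzero nilpotent Lie algebra, and by the standard fact that the last nonzero term of the descending central series lies in the center, its center is nontrivial. This yields $Z(\A/{\rm Leib}(\A)) \neq \{0\}$ and finishes the argument.

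I do not expect any real obstacle here. The only delicate step is showing ${\rm Leib}(\A) \neq \A$ for nonzero $\A$, and this is immediate once one uses that ${\rm Leib}(\A)$ is abelian; apart from that, the argument reduces entirely to the classical Lie-algebra fact that nonzero nilpotent Lie algebras have nontrivial center.
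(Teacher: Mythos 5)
Your proof is correct and follows essentially the same route as the paper: pass to the nilpotent Lie algebra $\A/{\rm Leib}(\A)$ and invoke the classical fact that a nonzero nilpotent Lie algebra has nontrivial center, so condition (1) of completeness fails. The only difference is that you explicitly verify $\A \neq {\rm Leib}(\A)$ for nonzero $\A$ (using that ${\rm Leib}(\A)$ is abelian), a point the paper's proof leaves implicit.
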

\begin{proof} 
Let $\A$ be a nilpotent Leibniz algebra. Then $\A/{\rm Leib}(\A)$ is a nilpotent Lie algebra. Hence  $Z(\A/{\rm Leib}(\A)) \not= \{0\}$ which implies that $\A$ is not complete.
\end{proof}

Suppose $\C$ be a cyclic $n$-dimensional  Leibniz algebra over field $F$ generated by a nonzero element $x \in \C$. Then $\C = {\rm span}\{x, x^2, \cdots , x^n\}$ with $[x, x^n] = \sum_{i = 2}^n k_ix^i$ for some $k_2, k_3, \cdots k_n \in F$. It is easy to see that  $\C$ is nipotent if $k_j = 0, \, 2 \leq j \leq n$ otherwise $\C$ is a solvable Leibniz algebra. Also ${\rm Leib}(\C) = \C^2 = {\rm span}\{x^2, \cdots , x^n\}$.
Hence $\C/{\rm Leib}(\C)$ is a one dimensional Lie algebra which implies that $\C$ is not complete. The following propositions give explicit descriptions of $Der(\C)$ by direct calculations (see \cite{Kristen}). It is clear from these explicit descriptions that a nilpotent cyclic Leibniz algebra does contain outer derivations, whereas all derivations for a non-nilpotent solvable Leibniz algebra are inner.

\begin{prop} Let $\C = {\rm span}\{x, x^2, \cdots , x^n\}$ with $[x, x^n] = 0$ be the $n$-dimensional nilpotent cyclic Leibniz algebra. Then $Der(\C)$ is a $n$- dimensional Lie algebra spanned by $\{\delta_1 , \delta_2 , \cdots , \delta_n\}$ where
\begin{equation*}
\delta_k(x^i) = \begin{cases} 
ix^i , \, {\rm for} \, k=1, \, 1 \leq i \leq n\\
x^{i+k-1}, \, {\rm for} \, \, i+k-1 \leq n, \, 2 \leq k \leq n\\
0, \quad {\rm otherwise}.
\end{cases}
\end{equation*}

\end{prop}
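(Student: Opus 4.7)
The plan is to proceed in three steps: establish an upper bound $\dim Der(\C) \le n$ using that $\C$ is singly generated, verify that each $\delta_k$ is a derivation, and then check linear independence.

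Step one uses that $\C$ is generated by $x$. Since $x^i = [x, x^{i-1}]$, applying any $\delta \in Der(\C)$ yields the recursion $\delta(x^i) = [\delta(x), x^{i-1}] + [x, \delta(x^{i-1})]$, so $\delta$ is completely determined by $\delta(x)$. The evaluation map $\delta \mapsto \delta(x)$ therefore embeds $Der(\C)$ into $\C$, giving $\dim Der(\C) \le n$.

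Step two is the main computation. I would first record that ${\rm Leib}(\C) = {\rm span}\{x^2, \ldots, x^n\}$ lies in the left center of $\C$, since the left Leibniz identity applied to $[x, [x, z]]$ gives $[x^2, z] = 0$ and an easy induction on $i$ (using $x^i = [x, x^{i-1}]$) extends this to $[x^i, z] = 0$ for all $i \ge 2$. Because the only nontrivial products in $\C$ are $[x, x^j] = x^{j+1}$ for $j \le n-1$ and $[x, x^n] = 0$, the derivation identity for $\delta_k$ need only be checked on pairs $(x, x^j)$. I would split into $k = 1$, where $\delta_1$ is a weight-grading and a direct computation gives $\delta_1([x, x^j]) = (j+1)x^{j+1} = [x, x^j] + j[x, x^j]$, and $2 \le k \le n$, where $\delta_k$ is an index shift; within the second case one subdivides by whether $j + k - 1$ lies in $\{2, \ldots, n\}$. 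The term $[\delta_k(x), x^j] = [x^k, x^j]$ always vanishes for $k \ge 2$ by the left-center observation, so the identity reduces to comparing $\delta_k(x^{j+1})$ with $[x, \delta_k(x^j)]$, and both sides match.

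Step three is immediate: evaluating at $x$ gives $\delta_1(x) = x$ and $\delta_k(x) = x^k$ for $2 \le k \le n$, which are distinct basis vectors of $\C$. Hence $\{\delta_1, \ldots, \delta_n\}$ is linearly independent, and combined with the bound from step one forms a basis of $Der(\C)$.

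The main obstacle is the boundary case $j + k - 1 = n$ in step two, where one must note that both $\delta_k(x^{j+1})$ (because $j + k > n$) and $[x, \delta_k(x^j)] = [x, x^n]$ vanish for the same structural reason; once this is handled, the remainder of the verification is formal.
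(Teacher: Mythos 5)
Your proposal is correct. The paper itself gives no argument here: it simply asserts the description ``by direct calculations'' and cites the thesis \cite{Kristen}, where the standard route is to write $\delta(x)=\sum_i a_i x^i$ for an arbitrary $\delta\in Der(\C)$, propagate through $x^i=[x,x^{i-1}]$, and read off the general form of a derivation. Your organization is a little different and arguably cleaner: you get the upper bound $\dim Der(\C)\le n$ purely from the fact that $\C$ is generated by $x$ (so evaluation at $x$ is injective), and then you only have to verify that the $n$ explicitly given maps $\delta_k$ are derivations and are independent, rather than having to show that every assignment of $\delta(x)$ extends to a derivation. The verification itself is sound, including the boundary case $j+k-1=n$, where both $\delta_k(x^{j+1})$ and $[x,\delta_k(x^j)]=[x,x^n]$ vanish. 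One small point you should make explicit: reducing the derivation identity to pairs $(x,x^j)$ is legitimate only because for pairs $(x^i,x^j)$ with $i\ge 2$ \emph{both} sides vanish --- the left side because $[x^i,x^j]=0$, and the right side because $\delta_k(x^i)\in{\rm span}\{x^2,\ldots,x^n\}$ for $i\ge 2$ and that span lies in the left center (your own observation); one sentence saying this closes the argument completely.
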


\begin{prop} Let $\C = {\rm span}\{x, x^2, \cdots , x^n\}$ with $0 \not= [x, x^n] = \sum_{i = 2}^n k_ix^i$ be the $n$-dimensional non-nilpotent cyclic Leibniz algebra. Then $Der(\C)$ is a $(n-1)$- dimensional Lie algebra spanned by $\{\delta_1 , \delta_2 , \cdots , \delta_{n-1}\}$ where
\begin{equation*}
\delta_k(x^i) = \begin{cases} 
x^{i+1} , \, {\rm for} \, k=1, \, 1 \leq i \leq n-1\\
 \sum_{i = 2}^n k_ix^i , \, k=1 , \, i=n\\
\delta_1^k(x^i), \, {\rm for} \, \, 1 \leq i \leq n, \, 2 \leq k \leq n-1.
\end{cases}
\end{equation*}

\end{prop}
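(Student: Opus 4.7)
Since $\C$ is generated as a Leibniz algebra by $x$, every derivation $\delta \in Der(\C)$ is uniquely determined by $\delta(x)$. Write $\delta(x) = \sum_{i=1}^n a_i x^i$; this parametrizes $Der(\C)$ by at most $n$ scalars, and the plan is to show that exactly one linear constraint is forced by the defining relation $[x,x^n] = \sum_{\ell=2}^n k_\ell x^\ell$, producing the claimed dimension $n-1$.

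For the construction side, $\delta_1 = L_x$ is a derivation by the defining property of a left Leibniz algebra, and its image lies in ${\rm Leib}(\C) = {\rm span}\{x^2, \ldots, x^n\}$, which is contained in the left center $Z^l(\C)$ as observed at the start of Section 2. A short induction on $k$ then shows $\delta_k := L_x^k$ is a derivation for every $k \geq 1$: if $D$ is a derivation with $D(\C) \subseteq Z^l(\C)$, then in verifying the derivation identity for $L_x \circ D$ the two ``cross terms'' are $[D(a), L_x(b)]$ and $[L_x(a), D(b)]$, which vanish because $D(a),L_x(a) \in Z^l(\C)$; also $L_x D$ again has image in $Z^l(\C)$. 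Linear independence of $\delta_1,\ldots,\delta_{n-1}$ is immediate, since $\delta_k(x) = x^{k+1}$ gives the linearly independent vectors $x^2, x^3, \ldots, x^n$.

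For spanning, take a general $\delta \in Der(\C)$ with $\delta(x) = \sum_{i=1}^n a_i x^i$ and subtract $\sum_{k=1}^{n-1} a_{k+1}\delta_k$ to obtain a derivation $\delta'$ with $\delta'(x) = a_1 x$. Using the derivation rule together with $[x, x^i] = x^{i+1}$ for $i \leq n-1$, a routine induction yields $\delta'(x^i) = i\, a_1\, x^i$ for $1 \leq i \leq n$. Evaluating $\delta'$ on the relation $[x,x^n] = \sum_\ell k_\ell x^\ell$ in two ways --- once by the Leibniz rule on the left, once by linearity on the right --- produces
\[
(n+1)\,a_1 \sum_{\ell=2}^n k_\ell x^\ell \;=\; a_1 \sum_{\ell=2}^n \ell\, k_\ell x^\ell,
\]
which rearranges to $a_1 \sum_{\ell=2}^n (n+1-\ell)\, k_\ell\, x^\ell = 0$.

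The main obstacle, and precisely where the non-nilpotent hypothesis is essential, is extracting $a_1 = 0$ from this last equation. Since $\C$ is non-nilpotent, some $k_\ell \neq 0$, and provided $n+1-\ell$ is nonzero in $F$ (automatic in characteristic zero), we conclude $a_1 = 0$ and hence $\delta = \sum_{k=1}^{n-1} a_{k+1}\delta_k$, exhibiting $\{\delta_1,\ldots,\delta_{n-1}\}$ as a basis of $Der(\C)$. By contrast, in the nilpotent case all $k_\ell$ vanish, no constraint on $a_1$ arises, and the ``Euler'' derivation $x^i \mapsto i x^i$ survives, accounting for the one extra dimension in the preceding proposition.
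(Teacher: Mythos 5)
Your argument is correct and is essentially a tidier version of the ``direct calculation'' the paper defers to the thesis \cite{Kristen}: you identify $\delta_1=L_x$ and $\delta_k=L_x^k$, justify that powers of $L_x$ are derivations via the observation ${\rm im}(L_x)\subseteq \C^2={\rm Leib}(\C)\subseteq Z^l(\C)$ (so both cross terms in the Leibniz-rule check vanish), get independence from $\delta_k(x)=x^{k+1}$, and get spanning from the fact that a derivation of a cyclic algebra is determined by its value on the generator together with the single constraint coming from the relation $[x,x^n]=\sum_\ell k_\ell x^\ell$. One point deserves emphasis: the caveat you flag about $n+1-\ell$ being nonzero in $F$ is not a cosmetic hedge but a genuine restriction on the statement itself. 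If ${\rm char}(F)=p>0$ and $p$ divides $n+1-\ell$ for every $\ell$ with $k_\ell\neq 0$, your constraint $a_1\sum_\ell (n+1-\ell)k_\ell x^\ell=0$ is vacuous and the Euler derivation $x^i\mapsto i x^i$ really does survive: for instance, with $p=3$, $n=4$ and $[x,x^4]=x^2$, one checks directly that $\delta(x^i)=ix^i$ is a derivation (the critical identity reads $2x^2=\delta(x^2)=[x,x^4]+[x,4x^4]=5x^2$), so $\dim Der(\C)=n$ and the proposition as printed (over an arbitrary field $F$) fails. So your proof is complete in characteristic zero, which is evidently the intended setting (the paper adds explicit characteristic hypotheses in Theorems \ref{radical} and \ref{nilradical} but omits one here); to match the stated generality one must either assume ${\rm char}(F)=0$ (or large enough) or add the hypothesis that $k_\ell\neq 0$ for some $\ell$ with $n+1-\ell\not\equiv 0$ in $F$, exactly as your analysis indicates.
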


It is known that a nilpotent Lie algebra does contain outer derivations \cite{Jacobson-1}. However, as the following example shows 
there exists nilpotent Leibniz algebras which does not have outer derivations.

\begin{ex}\label{nil-inner} 
Consider the Leibniz algebra $\A = {\rm span}\{w, x, y, z\}$ with non-zero multiplications $[x, x] = z, [w, x] = y, [x, w] = -y+z, [w, y] = z, [y, w] = -z$.
Clearly ${\rm Leib}(\A) = {\rm span}\{z\} \subseteq \A^2 = {\rm span}\{y, z\}$ and $\A^4 = \{0\}$. So $\A$ is nilpotent. By direct calculation it is easy to see that $Der(\A) = {\rm span}\{\delta_1, \delta_2, \delta_3, \delta_4\}$ where

\vspace{3mm}
$\delta_1(w)=y, \delta_1(x)=0, \delta_1(y)=0, \delta_1(z)= 0,$

$\delta_2(w)=z, \delta_2(x)=0, \delta_2(y)=0, \delta_2(z)= 0,$

$\delta_3(w)=0, \delta_3(x)=y, \delta_3(y)=z, \delta_3(z)= 0,$

$\delta_4(w)=0, \delta_4(x)=z, \delta_4(y)=0, \delta_4(z)=0.$

\vspace{5mm}
\noindent Note that $L_w = \delta_3, L_x = -\delta_1+\delta_2+\delta_4$ and $L_y = -\delta_2$.  By definition im($\delta_i) \subseteq {\rm Leib}(\A)$ for $i = 2,  4$.  Also  im$(\delta_3-L_w) \subseteq {\rm Lieb}(\A)$ and  im$(\delta_1+ L_x$)= im($\delta_2+\delta_4) \subseteq {\rm Leib}(\A)$. Hence by linearity all derivations of $\A$ are inner.
\end{ex}

\section{Holomorph of Leibniz Algebra}. 
Let $\A$ be a Leibniz algebra over field $F$ and $Der(\A)$ be the Lie algebra of derivations of $\A$. Following the definition of Holomorph of a Lie algebra (see \cite{Jacobson}), we define the Holomorph of the Leibniz algebra $\A$ to be the vector space $hol(\A) = \A \oplus Der(\A)$, with multiplication defined by $[x+\delta_1, y+\delta_2] = [x,y] + \delta_1(y) + [L_x, \delta_2] + [\delta_1, \delta_2]$ for all $x, y \in \A, \delta_1, \delta_2 \in Der(\A)$. Observe that $[L_x, \delta_2] = L_{- \delta_2(x)}$. It is easy to check that $hol(\A)$ is a Leibniz algebra. For two subspaces $\cM$ and $\cL$ of $hol(\A)$ we define the left centralizer of $\cM$ in $\cL$ to be $Z_{\cL}^l(\cM) = \{x \in \cL \mid [x, \cM] = 0\}$.

\begin{prop}\label{hol-1} 
$Z_{hol(\A)}^l(\A) = \{x - L_x \mid x \in \A\}$.
\end{prop}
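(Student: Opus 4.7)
The plan is to unpack the bracket of $hol(\A)$ on a general element against an element of $\A$ (viewed as sitting in $hol(\A)$ via $y \mapsto y + 0$) and read off the centralizer condition. Take $u \in \A$, $\eta \in Der(\A)$, and $y \in \A$. From the defining multiplication
\[
[u + \eta, \, y + 0] = [u, y] + \eta(y) + [L_u, 0] + [\eta, 0] = L_u(y) + \eta(y),
\]
so the condition $u + \eta \in Z_{hol(\A)}^l(\A)$ becomes $L_u(y) + \eta(y) = 0$ for all $y \in \A$, i.e.\ $\eta = -L_u$.

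For the inclusion $\{x - L_x \mid x \in \A\} \subseteq Z_{hol(\A)}^l(\A)$, I first note that $-L_x \in Der(\A)$ because $\A$ is a left Leibniz algebra, so the element $x - L_x = x + (-L_x)$ genuinely lies in $hol(\A)$. Then by the computation above,
\[
[x - L_x, \, y] = L_x(y) + (-L_x)(y) = 0
\]
for every $y \in \A$, giving containment.

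For the reverse inclusion, suppose $u + \eta \in Z_{hol(\A)}^l(\A)$. Then the identity $L_u(y) + \eta(y) = 0$ for all $y \in \A$ forces $\eta = -L_u$ as an endomorphism of $\A$, so $u + \eta = u - L_u$ belongs to the set on the right.

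The only subtlety to watch for is bookkeeping: one must remember that $\A$ is identified with $\A \oplus 0 \subseteq hol(\A)$, so the mixed terms $[L_u, 0]$ and $[\eta, 0]$ vanish, and that $-L_u$ automatically lies in $Der(\A)$ (an immediate consequence of the left Leibniz axiom), which is what legitimizes writing $x - L_x$ as an element of $hol(\A)$. Beyond that, the proof is a one-line verification from the definition.
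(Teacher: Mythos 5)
Your argument is correct and is essentially the paper's own proof: expand the holomorph bracket of a general element $u+\eta$ against $y\in\A$, read off that membership in $Z^l_{hol(\A)}(\A)$ forces $\eta=-L_u$, and note the easy containment of $\{x-L_x\}$. The extra bookkeeping you include (that $L_x\in Der(\A)$ by the left Leibniz identity, so $x-L_x$ is a legitimate element of $hol(\A)$) is a fine, if minor, addition the paper leaves implicit.
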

\begin{proof} Clearly $ \{x - L_x \mid x \in \A\} \subseteq Z_{hol(\A)}^l(\A)$.  Let $z \in Z_{hol(\A)}^l(\A)$. Then $z = x+\delta$ for some $x \in \A, \delta \in Der(\A)$. Then $0 = L_z(y) = [x+\delta , y] = L_x(y) + \delta(y)$ for all $y\in \A$. This implies $\delta = - L_x$, hence $z = x - L_x$ which proves the statement. 

\end{proof}

\begin{prop}\label{hol-2} 
$A\cap Z_{hol(\A)}^l(\A) = Z^l(\A)$.
\end{prop}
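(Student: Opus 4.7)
The plan is to combine Proposition~\ref{hol-1} with a direct inspection of what it means for an element of $hol(\A) = \A \oplus Der(\A)$ to lie in the summand $\A$. By Proposition~\ref{hol-1} every element of $Z_{hol(\A)}^l(\A)$ has the form $x - L_x$ for some $x \in \A$; such an element belongs to the subspace $\A \oplus 0$ if and only if its $Der(\A)$-component vanishes, i.e., $L_x = 0$.

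First I would handle the inclusion $\A \cap Z_{hol(\A)}^l(\A) \subseteq Z^l(\A)$: take $z \in \A \cap Z_{hol(\A)}^l(\A)$, write $z = x - L_x$ using Proposition~\ref{hol-1}, and observe that since $z \in \A$ the $Der(\A)$-part of $z$ is zero, forcing $L_x = 0$; by definition of $L_x$ this says $[x,y]=0$ for every $y \in \A$, so $x \in Z^l(\A)$, and hence $z = x \in Z^l(\A)$. For the reverse inclusion, let $x \in Z^l(\A)$; then $L_x = 0$ in $Der(\A)$, so $x = x - L_x$ lies in $Z_{hol(\A)}^l(\A)$ by Proposition~\ref{hol-1}, and trivially $x \in \A$.

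There is essentially no obstacle: the argument is a one-line unpacking of the previous proposition together with the definition of the left center. The only small point worth being explicit about is the identification of $\A$ with $\A \oplus 0$ inside $hol(\A) = \A \oplus Der(\A)$, which makes the equation $x - L_x \in \A$ equivalent to the vanishing of the derivation component $L_x$.
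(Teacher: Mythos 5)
Your proof is correct and follows essentially the same route as the paper: the nontrivial inclusion is obtained by writing $z = x - L_x$ via Proposition~\ref{hol-1} and noting that membership in $\A$ forces $L_x = 0$, while the reverse inclusion (which the paper dismisses as clear) is the observation that $x \in Z^l(\A)$ gives $x = x - L_x \in Z_{hol(\A)}^l(\A)$. No issues.
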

\begin{proof} Clearly $ Z^l(\A) \subseteq A\cap Z_{hol(\A)}^l(\A)$.  Let $z \in A\cap Z_{hol(\A)}^l(\A)$. Then by Proposition \ref{hol-1}, $z = x- L_x \in \A$ for some $x \in \A$. This implies $z = x$ and $L_x = 0$. So $A\cap Z_{hol(\A)}^l(\A) \subseteq \{x \in \A \mid L_x =0\} = Z^l(\A)$ proving the result.

\end{proof}

In \cite{Meng}, Meng proved that a Lie algebra $L$ is complete if and only if  $hol(L)=L\oplus$(Z$_{hol(L)}(L)$. The following theorem is the corresponding result in the case of Leibniz algebras.

\begin{thm}\label{complete} If the Leibniz algebra $\A$ is complete then $hol(\A) = \A + (Z_{hol(\A)}^l(\A) \oplus I)$ and $\A \cap (Z_{hol(\A)}^l(\A) \oplus I) = {\rm Leib}(\A)$, where $I = \{\delta \in Der(\A) \mid {\rm im}(\delta) \subseteq {\rm Leib}(\A)\}$.

\end{thm}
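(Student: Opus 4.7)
The plan is to exploit the decomposition $hol(\A) = \A \oplus Der(\A)$ at the level of vector-space components, using the two defining properties of completeness one at a time: condition (2) (every derivation is inner) to produce the sum decomposition, and condition (1) ($Z(\A/{\rm Leib}(\A))=0$) to pin down the intersection.

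For the first assertion, I would take an arbitrary element $y+\delta \in hol(\A)$ with $y\in\A$ and $\delta\in Der(\A)$. By completeness, there is some $x\in\A$ with $\mathrm{im}(\delta-L_x)\subseteq{\rm Leib}(\A)$, i.e.\ $\delta-L_x \in I$. The key trick is to rewrite $L_x$ using the identity $L_x = x - (x-L_x)$; this is legitimate in $hol(\A) = \A\oplus Der(\A)$ since $x-L_x\in Z_{hol(\A)}^l(\A)$ by Proposition~\ref{hol-1}. Substituting gives
\[
y+\delta \;=\; (y+x) \;-\; (x-L_x) \;+\; (\delta-L_x),
\]
which exhibits $y+\delta$ as a sum of elements in $\A$, $Z_{hol(\A)}^l(\A)$, and $I$ respectively. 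To see that the sum $Z_{hol(\A)}^l(\A) + I$ is direct, I look at $\A$- and $Der$-components: every element of $Z_{hol(\A)}^l(\A)$ has the form $(x,-L_x)$, while every element of $I$ has the form $(0,\delta)$; equality forces $x=0$ and hence $L_x=\delta=0$.

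For the intersection $\A\cap(Z_{hol(\A)}^l(\A)\oplus I)={\rm Leib}(\A)$, the inclusion $\supseteq$ is immediate: if $w\in{\rm Leib}(\A)\subseteq Z^l(\A)$ then $L_w=0$, so $w=w-L_w\in Z_{hol(\A)}^l(\A)\subseteq Z_{hol(\A)}^l(\A)\oplus I$, and of course $w\in\A$. Conversely, suppose $y\in\A\cap(Z_{hol(\A)}^l(\A)\oplus I)$, and write $y=(x-L_x)+\delta$ with $x\in\A$ and $\delta\in I$. Comparing $Der$-components in $hol(\A)$ forces $\delta = L_x$, so $\mathrm{im}(L_x)\subseteq{\rm Leib}(\A)$, meaning $[x,a]\in{\rm Leib}(\A)$ for all $a\in\A$. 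This says $\bar x = x+{\rm Leib}(\A)$ lies in the left center of the Lie algebra $\A/{\rm Leib}(\A)$, which equals the center; by condition (1) of completeness, $\bar x = 0$, i.e.\ $x\in{\rm Leib}(\A)$. Comparing $\A$-components then gives $y=x\in{\rm Leib}(\A)$.

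The only genuinely subtle point is the step $\delta = L_x \implies x\in{\rm Leib}(\A)$: one has to recognize that $\mathrm{im}(L_x)\subseteq{\rm Leib}(\A)$ is exactly the statement that $\bar x$ is central in $\A/{\rm Leib}(\A)$ (where centrality on one side suffices because the quotient is a Lie algebra), so that condition (1) of the definition of completeness applies cleanly. Once that observation is in hand, everything else is direct bookkeeping in the direct-sum decomposition $hol(\A)=\A\oplus Der(\A)$.
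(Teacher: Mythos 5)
Your proof is correct and takes essentially the same route as the paper: the same rewriting $y+\delta=(y+x)-(x-L_x)+(\delta-L_x)$ based on Proposition \ref{hol-1}, directness of $Z_{hol(\A)}^l(\A)\oplus I$ by comparing components, and completeness for the intersection statement. Your component-wise argument that $y=(x-L_x)+\delta$ forces $\delta=L_x$, hence $\bar{x}\in Z(\A/{\rm Leib}(\A))=\{0\}$ and $y=x\in{\rm Leib}(\A)$, simply spells out the step the paper compresses into its appeal to Proposition \ref{hol-2} and the equality $Z^l(\A)={\rm Leib}(\A)$, so it is, if anything, slightly more complete than the printed argument.
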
 

\begin{proof}
Let $x + \delta \in hol(\A)$ be any element. Then $x \in \A$ and $\delta \in Der(\A)$.  Since $\A$ is  complete there is some $y \in \A$ such that 
${\rm im}(\delta - L_y) \subseteq {\rm Leib}(\A)$. Then $(\delta - L_y) \in I$.  So $x + \delta = (x+y) + (-y - L_{-y}) + (\delta - L_y) \in \A + Z_{hol(\A)}^l(\A) + I$ by Proposition \ref{hol-1}.

To show that the second sum is direct, let $\delta \in Z_{hol(\A)}^l(\A)\cap I$. Then $[\delta , x] = \delta(x) = 0$ for all $x \in \A \subseteq hol(\A)$. Hence $\delta = 0$ which implies  $Z_{hol(\A)}^l(\A)\cap I = \{0\}$. Finally, by Proposition \ref{hol-2} ,  $\A \cap (Z_{hol(\A)}^l(\A) \oplus I) = 
\A \cap Z_{hol(\A)}^l(\A) = Z^l(\A) = {\rm Leib}(\A)$ since $\A$ is complete.

\end{proof}

Consider the nilpotent Leibniz algebra $\A = {\rm span}\{w, x, y, z\}$ with non-zero multiplications $[x, x] = z, [w, x] = y, [x, w] = -y+z, [w, y] = z, [y, w] = -z$ in Example \ref{nil-inner} which is not complete. In this case, ${\rm Leib}(\A) = {\rm span}\{z\}, I = {\rm span}\{\delta_2, \delta_4\}, Z_{hol(\A)}^l(\A) = {\rm span}\{z, w- L_w, x- L_x, y- L_y\} = {\rm span}\{z, w- \delta_3, x+ \delta_1- \delta_2- \delta_4, y+ \delta_2\}$. Hence 
$Z_{hol(\A)}^l(\A)\cap I = \{0\}$ and $hol(\A) = \A + (Z_{hol(\A)}^l(\A) \oplus I)$. Also $\A \cap (Z_{hol(\A)}^l(\A) \oplus I) = {\rm Leib}(\A)$.
Thus the converse of the statement in Theorem \ref{complete} does not hold. However, it is shown in Example \ref{nil-inner} , all derivations of $\A$ are inner. The following result shows that it is true in general. 

\begin{thm}\label{complete} Let $\A$ be a Leibniz algebra such that $hol(\A) = \A + (Z_{hol(\A)}^l(\A) \oplus I)$, where $I = \{\delta \in Der(\A) \mid {\rm im}(\delta) \subseteq {\rm Leib}(\A)$. Then all derivations of $\A$ are inner.

\end{thm}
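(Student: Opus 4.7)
The plan is to chase a general derivation $\delta \in Der(\A)$ through the given decomposition of $hol(\A)$ and read off an element $y \in \A$ witnessing that $\delta$ is inner, i.e.\ such that ${\rm im}(\delta - L_y) \subseteq {\rm Leib}(\A)$.

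First I would embed $\delta$ into $hol(\A) = \A \oplus Der(\A)$ as the element with zero $\A$-component. By the hypothesis, $\delta$ admits a decomposition $\delta = a + z + \eta$ with $a \in \A$, $z \in Z_{hol(\A)}^l(\A)$ and $\eta \in I$. Next, I would invoke Proposition \ref{hol-1} to rewrite $z = x - L_x$ for some $x \in \A$, giving
\[
\delta = (a + x) + (-L_x + \eta)
\]
inside $\A \oplus Der(\A)$. Comparing $\A$- and $Der(\A)$-components on the two sides of this equality shows $a + x = 0$ and $\delta = -L_x + \eta$, so $\delta - L_{-x} = \delta + L_x = \eta \in I$. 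By the definition of $I$, this immediately gives ${\rm im}(\delta - L_{-x}) \subseteq {\rm Leib}(\A)$, so $\delta$ is inner with witness $y = -x$.

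There is no serious obstacle here; the proof is essentially an unpacking of the direct-sum decomposition. The only care needed is to distinguish the formal direct sum $\A \oplus Der(\A)$ underlying $hol(\A)$ (used to project components) from the sum $\A + (Z_{hol(\A)}^l(\A) \oplus I)$ appearing in the hypothesis (which is not claimed to be direct, as the intersection is precisely ${\rm Leib}(\A)$ by the previous theorem's analysis). The component-matching step is legitimate because $Z_{hol(\A)}^l(\A) \subseteq \A \oplus Der(\A)$ contributes $x$ to the $\A$-component and $-L_x$ to the $Der(\A)$-component, and $I \subseteq Der(\A)$ contributes only to the $Der(\A)$-component.
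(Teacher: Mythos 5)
Your argument is correct and follows essentially the same route as the paper: decompose $\delta$ via the hypothesis, use Proposition \ref{hol-1} to write the $Z_{hol(\A)}^l(\A)$-part as $x - L_x$, and conclude that $\delta$ differs from a left multiplication by an element of $I$. The only cosmetic difference is that you match components in the vector space decomposition $\A \oplus Der(\A)$, while the paper evaluates the holomorph bracket $[\delta, z]$ on elements $z \in \A$ (where the $x - L_x$ term acts as zero); both yield the same witness and the same conclusion.
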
 

\begin{proof} Let $\delta \in Der(\A) \subseteq hol(\A) = \A + (Z_{hol(\A)}^l(\A) \oplus I)$. Then by Proposition \ref{hol-1} $\delta = x + (y- L_y) + \delta_1$ for some $x, y \in \A, \delta_1 \in  I$.  Then for any $z \in \A$ we have $\delta(z) = [\delta, z] = [x, z] + [y, z] - [y, z] + [\delta_1, z] = L_x(z) + \delta_1(z)$. Therefore, $(\delta - L_x)(z) = \delta_1(z) \in {\rm Leib}(\A)$ which implies that $\delta$ is a inner derivation.

\end{proof}

\end{document}